\documentclass{article}
\usepackage{amsmath}
\usepackage{amssymb}
\usepackage{amsthm}
\usepackage{amsfonts, url}
\usepackage{graphicx}
\usepackage[colorlinks,urlcolor=blue]{hyperref}
\theoremstyle{definition}
\newtheorem{definition}{Definition}[section]
\newtheorem{example}[definition]{Example}
\newtheorem{remark}[definition]{Remark}
\newtheorem{question}[definition]{Question}
\theoremstyle{plain}
\newtheorem{theorem}[definition]{Theorem}

\newtheorem{proposition}[definition]{Proposition}

\numberwithin{equation}{section}
\begin{document}
%%%%%%%%%%%%%%%%%%%%%%%%%%%%%%%%%%%%%%%%%%%%%%%%%%%%%%%%%%%%%%%%%%%%%%

%%%\\\ %%%%%%%%%%%%%%%%%%%%%%%%%%%%%%%%%%%%%%%%%%%%%%%%%%%%%%%%%%%%%%%%
\newcommand{\LomX}{X}   \newcommand{\LomY}{Y}
\def\LomA{F}
\def\op{*}   \def\opX{\op_{\LomX}}  \def\opY{\op_{\LomY}}
\def\leX{\leq_{\LomX}}   \def\leY{\leq_{\LomY}}
\def\kx{\mathfrak a}   \def\ky{\mathfrak b}  \def\kz{\mathfrak c}
\def\kJ{0}   \def\kJX{\kJ_{\LomX}}  \def\kJY{\kJ_{\LomY}}
\def\Xo0{{\mathcal \LomX}}   \def\Yo0{{\mathcal \LomY}}
\def\kXo0{$\Xo0$ $:=$ $(\LomX,$ $\opX,$ $\kJX)$}
\def\kYo0{$\Yo0$ $:=$ $(\LomY,$ $\opY,$ $\kJY)$}
\def\x{x}  \def\y{y} \def\z{z}
\def\jt{t}   \def\js{s}  \def\kt{\beta}  \def\vt{\varepsilon}
%%%\\\ %%%%%%%%%%%%%%%%%%%%%%%%%%%%%%%%%%%%%%%%%%%%%%%%%%%%%%%%%%%%%%%%
%%%%%%%%%%%%%%%%%%%%%%%%%%%%%%%%%%%%%%%%%%%%%%%%%%%%%%%%%%%%%%%%%%%%%%%
 \def\xx{\x\opX \x}  \def\yy{\y\opX \y} \def\zz{\z\opX \z}
\def\xy{\x\opX \y}  \def\yx{\y\opX \x} \def\yz{\y\opX \z}
\def\zx{\z\opX \x}  \def\xz{\x\opX \z}
\newcommand{\kxy}{\kx\opX \ky}  \newcommand{\kyy}{\ky\opX \ky}
\newcommand{\kyz}{\ky\opX \kz}  \newcommand{\kzx}{\kz\opX \kx}
\newcommand{\kxz}{\kx\opX \kz}
%%%%%%%%%%%%%%%%%%%%%%%%%%%%%%%%%%%%%%%%%%%%%%%%%%%%%%%%%%%%%%%%%%%%%%%
%%%\\\ %%Fuzzy ==> %%%%%%%%%%%%%%%%%%%%%%%%%%%%%%%%%%%%%%%%%%%%%%%%%%%%
\newcommand{\kap}{\hslash}  \newcommand{\kbp}{\eth}
\def\kwp{f} \def\kmp{\mu}
%%%\\\ %%<== Fuzzy %%%%%%%%%%%%%%%%%%%%%%%%%%%%%%%%%%%%%%%%%%%%%%%%%%%%
%%%\\\ %%%Examples ==>%%%%%%%%%%%%%%%%%%%%%%%%%%%%%%%%%%%%%%%%%%%%%%%%%
\newcommand{\0}{\varrho_0}  \newcommand{\1}{\varrho_1} \newcommand{\2}{\varrho_2}
\newcommand{\3}{\varrho_3}  \newcommand{\4}{\varrho_4}
%%%\\\ %%%<== Examples%%%%%%%%%%%%%%%%%%%%%%%%%%%%%%%%%%%%%%%%%%%%%%%%%

%%%%%%%%%%%%%%%%%%%%%%%%%%%%%%%%%%%%%%%%%%%%%%%%%%%%%%%%%%%%%%%%%%%%%%
\noindent {\large \textbf
  	{Fuzzy normed BCK-algebras and BCI-algebras}}  \\[13pt]
%%%%%%%%%%%%%%%%%%%%%%%%%%%%%%%%%%%%%%%%%%%%%%%%%%%%%%%%%%%%%%%%%%%%%%%
{Young Bae Jun$^{1}$ and Ravikumar Bandaru$^{2,*}$} \\[2mm]
%%%%%%%%%%%%%%%%%%%%%%%%%%%%%%%%%%%%%%%%%%%%%%%%%%%%%%%%%%%%%%%%%%%%%%
Department of Mathematics Education, \\
Gyeongsang National University,
Jinju 52828, Korea \\
e-mail: skywine@gmail.com  \\
ORCID iD \url{https://orcid.org/0000-0002-0181-8969} \\ [2mm]
%%%%%%%%%%%%%%%%%%%%%%%%%%%%%%%%%%%%%%%%%%%%%%%%%%%%%%%%%%%%%%%%%%%%%%%
$^{1}$Department of Mathematics, School of Advanced Sciences,
VIT-AP University,  Beside AP Secretariate, Andhra Pradesh-522237, India  \\
e-mail: ravimaths83@gmail.com 	\\
ORCID iD  \url{https://orcid.org/0000-0001-8661-7914}        \\ [2mm]
%%%%%%%%%%%%%%%%%%%%%%%%%%%%%%%%%%%%%%%%%%%%%%%%%%%%%
 $^*$ Correspondence: Ravikumar Bandaru (ravimaths83@gmail.com)

\noindent\hrulefill \\
%%%%%%%%%%%%%%%%%%%%%%%%%%%%%%%%%%%%%%%%%%%%%%%%%%%%%%%%%%%%%%%%%%%%%%
 {\bf Abstract}
%%%%%%%%%%%%%%%%%%%%%%%%%%%%%%%%%%%%%%%%%%%%%%%%%%%%%%%%%%%%%%%%%%%%%%
	   In this paper, we introduce and study the notion of fuzzy normed
	   BCK-algebras and fuzzy normed BCI-algebras as a natural extension of
	   classical normed algebraic structures into the fuzzy setting.
	   A fuzzy norm on a BCK/BCI-algebra is defined as a mapping from the
	   algebra and a positive real parameter into the unit interval satisfying
	   suitable axioms analogous to those of fuzzy normed linear spaces.
	   Several examples are presented to illustrate the validity  of the axioms.
Fundamental properties of fuzzy normed BCK/BCI-algebras are established,
	   including monotonicity, chained triangle inequalities, and order-related
	   behaviors. It is shown that every BCK/BCI-algebra admits a fuzzy norm,
	   and the behavior of fuzzy norms under algebra homomorphisms is
	   investigated. Necessary and sufficient conditions are obtained for the
	   transfer of fuzzy norms via injective, surjective, and bijective
	   homomorphisms. A characterization theorem is proved showing that the
	   main fuzzy norm inequality can be reduced to a simpler condition.
	   These results generalize known concepts in fuzzy algebra and provide a
	   new analytical framework for studying BCK/BCI-algebras.
	
\vspace{2mm} \noindent
%%%%%%%%%%%%%%%%%%%%%%%%%%%%%%%%%%%%%%%%%%%%%%%%%%%%%%%%%%%%%%%%%%%%%%%
 {\it Keywords:} Homomorphism, fuzzy BCK/BCI-norm, fuzzy normed BCK/BCI-algebra.

%%%%%%%%%%%%%%%%%%%%%%%%%%%%%%%%%%%%%%%%%%%%%%%%%%%%%%%%%%%%%%%%%%%%%%%

 \vspace{2mm} \noindent
{\it 2020 Mathematics Subject Classification.}   03G25, 06F35, 08A72.

 \noindent\hrulefill \\

%%%%%%%%%%%%%%%%%%%%%%%%%%%%%%%%%%%%%%%%%%%%%%%%%%%%%%%%%%%%%%%%%%%%%%
\section{Introduction}
%%%%%%%%%%%%%%%%%%%%%%%%%%%%%%%%%%%%%%%%%%%%%%%%%%%%%%%%%%%%%%%%%%%%%%

Fuzzy set theory, introduced by Zadeh \cite{Zadeh}, has provided a powerful
framework for modeling uncertainty and gradual membership in mathematics
and its applications. Since then, fuzzy concepts have been extensively
applied to algebraic systems such as groups, rings, vector spaces, and
logical algebras. In particular, fuzzy normed linear spaces have been
investigated by several authors as a natural extension of classical normed
spaces; see, for example, Felbin \cite{FSS48-239} and Bag and Samanta
\cite{AFMI-2013}.

BCK-algebras and BCI-algebras arise naturally in the algebraic study of
implication and non-classical logic. These structures were systematically
developed and studied in the monographs of Meng and Jun \cite{ MJ-Book}
and Huang \cite{H-Book}. Since their introduction, BCK/BCI-algebras have
been widely investigated with respect to ideals, filters, homomorphisms,
and related algebraic properties. Fuzzy concepts have also been introduced
into BCK-algebras, notably through the study of fuzzy ideals and related
structures (see, for instance, Jun and Kim \cite{JunKim}).

Motivated by the successful interaction between fuzzy theory and algebraic
logic, it is natural to extend the notion of fuzzy norms to BCK/BCI-algebras.
Although fuzzy norms have been studied in vector spaces and other algebraic
systems \cite{FSS48-239, AFMI-2013}, systematic research on fuzzy normed
BCK/BCI-algebras remains limited. This observation provides the main
motivation for the present work.

The purpose of this paper is to introduce the concept of fuzzy normed
BCK-algebras and fuzzy normed BCI-algebras and to investigate their
fundamental properties. We establish several inequalities characterizing
fuzzy norms, analyze their behavior with respect to the induced order, and
study their stability under algebra homomorphisms. In particular, we show
that every BCK/BCI-algebra admits a fuzzy norm and obtain necessary and
sufficient conditions for transferring fuzzy norms via injective,
surjective, and bijective homomorphisms.

The paper is organized as follows. In Section~2, we recall basic definitions
and properties of BCK/BCI-algebras and fuzzy concepts needed throughout the
paper. Section~3 introduces fuzzy normed BCK/BCI-algebras and develops their
fundamental properties through propositions, examples, and characterization
theorems. Finally, Section~4 summarizes the main results and outlines
directions for future research.

%%%%%%%%%%%%%%%%%%%%%%%%%%%%%%%%%%%%%%%%%%%%%%%%%%%%%%%%%%%%
\section{Preliminaries}
%%%%%%%%%%%%%%%%%%%%%%%%%%%%%%%%%%%%%%%%%%%%%%%%%%%%%%%%%%%%

In this section, we will provide some formal definitions and properties of
BCK/BCI-algebras and fuzzy concepts that will be used throughout the paper.

If a set $\LomX$ has a special element $\kJX$ and a binary operation $\opX$
satisfying the conditions:
\begin{itemize}
	\item[\rm ($I_1$)] $(\forall \kx,\ky,\kz\in \LomX)$ $(((\kx \opX \ky)\opX (\kx\opX \kz))\opX (\kz\opX \ky)=\kJX),$
	\item[\rm ($I_2$)] $(\forall \kx,\ky\in \LomX)$ $((\kx\opX (\kx \opX \ky))\opX \ky=\kJX),$
	\item[\rm ($I_3$)] $(\forall \kx \in \LomX)$ $(\kx\opX \kx=\kJX),$
	\item[\rm ($I_4$)] $(\forall \kx,\ky\in \LomX)$ $(\kx \opX \ky=\kJX, \, \ky\opX \kx=\kJX ~\Rightarrow ~\kx=\ky)$,
\end{itemize}
then we say that $\LomX$ is a  \textcolor{red}{\it BCI-algebra}.
If a BCI-algebra $\LomX$ satisfies the following identity:
\begin{itemize}
	\item[\rm (K)] $(\forall \kx \in \LomX)$ $(\kJX\opX \kx=\kJX),$
\end{itemize}
then $\LomX$ is called a \textcolor{red}{\it BCK-algebra.}

The order relation $\leX$ in a BCK/BCI-algebra $\Xo0$ is defined as follows:
\begin{align*}
	(\forall \kx, \ky\in \LomX) (\kx\leX \ky ~\Leftrightarrow ~\kx\opX \ky=\kJX).
\end{align*}

Every BCK/BCI-algebra $\LomX$ satisfies the following conditions (see \cite{H-Book, MJ-Book}):
\begin{align}
	& \label{{a1}}
	(\forall \kx \in \LomX)\left(\kx\opX \kJX=\kx\right),
	\\& \label{{a2}}
	(\forall \kx,\ky,\kz\in \LomX)\left(\kx\leX \ky \, \Rightarrow \, \kx\opX \kz\leX \ky\opX \kz, \, \kz\opX \ky\leX \kz\opX \kx\right),
	\\& \label{{a3}}
	(\forall \kx,\ky,\kz\in \LomX)\left((\kx \opX \ky)\opX \kz=(\kx\opX \kz)\opX \ky\right).
\end{align}
%%%%%%%%%%%%%%%%%%%%%%%%%%%%%%%%%%%%%%%%%%%%%%%%%%%%%%%%%%%%%%%%%%%

 Every BCI-algebra $\LomX$ satisfies (see \cite{H-Book}):
 \begin{align}
 	& \label{{b1}}
 	(\forall \kx,\ky\in \LomX)\left(\kx \opX (\kx\opX (\kx\opX \ky))=\kx\opX \ky\right),
 	\\& \label{{b2}}
 	(\forall \kx,\ky\in \LomX)\left(\kJX\opX (\kx\opX \ky)
 	=(\kJX\opX \kx)\opX (\kJX\opX \ky)\right).
 \end{align}
%%%%%%%%%%%%%%%%%%%%%%%%%%%%%%%%%%%%%%%%%%%%%%%%%%%%%%%%%%%%%%%%%%%

For more information on BCI-algebra and BCK-algebra,
please refer to the books \cite{H-Book, MJ-Book}.

%%%%%%%%%%%%%%%%%%%%%%%%%%%%%%%%%%%%%%%%%%%%%%%%%%%%%%%%%%%%%%%%%%%%%%
\section{Fuzzy normed BCK/BCI-algebras }
%%%%%%%%%%%%%%%%%%%%%%%%%%%%%%%%%%%%%%%%%%%%%%%%%%%%%%%%%%%%%%%%%%%%%%

In what follows, let \kXo0 (or simply $\Xo0$) be a BCK-algebra or a BCI-algebra
unless otherwise specified.

\begin{definition}\label{DqD41-251207}
 By a \textcolor{red}{\it fuzzy normed BCK/BCI-algebra} on $\Xo0$ we mean a pair
	$(\Xo0,$ $\| \cdot \|_{f})$ 	equipped with a mapping
	$\| \cdot \|_{f} : \LomX\times (0, \infty)\rightarrow [0,1]$ satisfying the following conditions
	for all $\x, \y, \z\in \LomX$ and $\jt, \js \in (0, \infty)$:
	\begin{align}
		&\label{(FN1)}  \|(\x,\jt)\|_{f}=1 \Leftrightarrow \x=\kJX,
		\\&\label{(FN2)}  \jt\geq \js ~\Rightarrow ~\|(\x, \jt)\|_{f}\geq \|(\x, \js)\|_{f},
		\\&\label{(FN3)}  \underset{\jt\to \infty} \lim \|(\x, \jt)\|_{f}=1,
		\\&\label{(FN4)}  \|(\xz, \js+ \jt)\|_{f}\geq \min \{\|(\xy, \js)\|_{f}, \|(\yz, \jt)\|_{f}\}.
	\end{align}
	We say that $\| \cdot \|_{f}$ is the \textcolor{red}{\it fuzzy BCK/BCI-norm} on $\Xo0$.
\end{definition}

\begin{example}\label{Eq[A2-1]}
(i) Consider a BCK-algebra $\Xo0 :=(\LomX, \op, \0)$ where $\LomX =\{\0, \1\}$ and the operation $\op$ is given by the following Cayley table:
\begin{align*}\begin{array}{c|cc}
		\op & \0 & \1  \\
		\hline
		\0 & \0 & \0  \\
		\1 & \1 & \0  \\
\end{array}\end{align*}
Define a mapping
\begin{align*}
	\| \cdot \|_{f} : \LomX\times (0, \infty)\rightarrow [0,1],
  ~(\x, \jt)\mapsto \begin{cases}
	1   & \text{\rm if $\x=\0$},\\
	1- \tfrac{1}{e^{t}} & \text{\rm if $\x =\1$}.
\end{cases}
\end{align*}
It is routine to verify that $\| \cdot \|_{f}$ is a fuzzy BCK-norm on $\Xo0$, so
$(\Xo0,$ $\| \cdot \|_{f})$ is a fuzzy normed BCK-algebra.

(ii)  Consider a BCK-algebra $\Xo0 :=(\LomX, \opX, 0)$ where $\LomX =\mathbb{N}\cup \{0\}$ and the operation $\opX$ is defined  by
\begin{align*}
 (\forall \x, \y \in \LomX) (\xy =\max \{0, \x -\y\}).
\end{align*}
Define a mapping
\begin{align*}
	\| \cdot \|_{f} : \LomX\times (0, \infty)\rightarrow [0,1],
	~(\x, \jt)\mapsto \begin{cases}
		1   & \text{\rm if $\x=0$},\\
		\tfrac{\jt}{\x +\jt} & \text{\rm if $\x \neq 0$}.
	\end{cases}
\end{align*}
It is routine to verify that $\| \cdot \|_{f}$ is a fuzzy BCK-norm on $\Xo0$, so
$(\Xo0,$ $\| \cdot \|_{f})$ is a fuzzy normed BCK-algebra.
\end{example}

\begin{example}\label{Eq[B2-1]}
(i) Consider a BCI-algebra $\Xo0 :=(\LomX, \op, \0)$ where $\LomX =\{\0, \1\}$ and the operation $\op$ is given by the following Cayley table:
\begin{align*}\begin{array}{c|cc}
		\op & \0 & \1  \\
		\hline
		\0 & \0 & \1  \\
		\1 & \1 & \0  \\
\end{array}\end{align*}
Define a mapping
\begin{align*}
	\| \cdot \|_{f} : \LomX\times (0, \infty)\rightarrow [0,1],
	~(\x, \jt)\mapsto \begin{cases}
		1   & \text{\rm if $\x=\0$},\\
		1-\tfrac{1}{e^{\jt}} & \text{\rm if $\x =\1$}.
	\end{cases}
\end{align*}
It is routine to verify that $\| \cdot \|_{f}$ is a fuzzy BCI-norm on $\Xo0$, so
$(\Xo0,$ $\| \cdot \|_{f})$ is a fuzzy normed BCI-algebra.

(ii) Consider a BCI-algebra $\Xo0 :=(\LomX, \opX, 0)$ where $\LomX =\mathbb{Z}$ (integers) and the operation $\opX$ is defined  by
$\xy =\x -\y$ for all $\x, \y \in \LomX$.
Define a mapping
\begin{align*}
	\| \cdot \|_{f} : \LomX\times (0, \infty)\rightarrow [0,1],
	~(\x, \jt)\mapsto \begin{cases}
		1   & \text{\rm if $\x=0$},\\
		\tfrac{\jt}{|\x| +\jt} & \text{\rm if $\x \neq 0$}.
	\end{cases}
\end{align*}
It is routine to verify that $\| \cdot \|_{f}$ is a fuzzy BCI-norm on $\Xo0$, so
$(\Xo0,$ $\| \cdot \|_{f})$ is a fuzzy normed BCI-algebra.
\end{example}

It is clear that every fuzzy normed BCK-algebra is a fuzzy normed BCI-algebra, but not vice versa.

\begin{proposition}\label{PqP47-251206}
Every fuzzy normed BCK-algebra $(\Xo0,$ $\| \cdot \|_{f})$ satisfies the following assertions
for all $\x, \y, \z\in \LomX$ and $\jt, \js \in (0, \infty)$:
\begin{align}
&\label{qcP47-251206-1} \|(\kJX, \jt)\|_{f} =1,
\\&\label{qcP47-251206-1a} \|(\x\opX \kJX, \jt)\|_{f} =\|(\x, \jt)\|_{f},
\\&\label{qcP47-251206-2} \|(\xx, \jt)\|_{f} =1,
\\&\label{qcP47-251206-3}  \x\neq \kJX ~\Rightarrow ~\|(\x, \jt)\|_{f} <1,
\\&\label{qcP47-251206-5}
   \x\leX \y ~\Rightarrow ~\|(\x, \js +\jt)\|_{f} \geq \|(\y, \jt)\|_{f},
\\&\label{qcP47-251206-6}  \|(\xz, \js +\jt)\|_{f} \geq \|(\x, \js)\|_{f},
\\&\label{qcP47-251206-7}
   \|(\x, \js +\jt)\|_{f} \geq \min \{\|(\xy, \js)\|_{f}, \|(\y, \jt)\|_{f}\},
\\&\label{qcP47-251206-8}
      \underset{\jt \to 0^+} \lim \|(\x, \jt)\|_{f} =1 ~\Rightarrow ~\x=\kJX.
\end{align}
\end{proposition}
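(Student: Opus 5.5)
The plan is to derive every item directly from the axioms (FN1)--(FN4) of Definition~\ref{DqD41-251207}. The only algebraic facts needed are (\ref{{a1}}), ($I_3$) and, where the BCK hypothesis matters, (K). The key device is to specialize one of the variables in (FN4) to $\kJX$.

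First, the elementary items. Item (\ref{qcP47-251206-1}) is the direction ``$\Leftarrow$'' of (\ref{(FN1)}). Item (\ref{qcP47-251206-1a}) follows from $\x\opX\kJX=\x$, by (\ref{{a1}}). Item (\ref{qcP47-251206-2}) follows from $\xx=\kJX$, by ($I_3$), together with (\ref{qcP47-251206-1}). Item (\ref{qcP47-251206-3}) is the contrapositive of the direction ``$\Rightarrow$'' of (\ref{(FN1)}), using that the norm takes values in $[0,1]$.

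Next, the triangle-type items. For (\ref{qcP47-251206-7}), I put $\z:=\kJX$ in (\ref{(FN4)}) and rewrite $\x\opX\kJX=\x$ and $\y\opX\kJX=\y$ via (\ref{{a1}}). This gives
$$\|(\x,\js+\jt)\|_{f}\geq\min\{\|(\xy,\js)\|_{f},\|(\y,\jt)\|_{f}\}.$$
For (\ref{qcP47-251206-5}), the hypothesis $\x\leX\y$ means $\xy=\kJX$, so by (\ref{qcP47-251206-1}) the first entry of the minimum in (\ref{qcP47-251206-7}) equals $1$. The minimum is then $\|(\y,\jt)\|_{f}$, which is the claim. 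For (\ref{qcP47-251206-6}), I instead put $\y:=\kJX$ in (\ref{(FN4)}), obtaining
$$\|(\xz,\js+\jt)\|_{f}\geq\min\{\|(\x\opX\kJX,\js)\|_{f},\|(\kJX\opX\z,\jt)\|_{f}\}.$$
By (\ref{{a1}}) the first entry is $\|(\x,\js)\|_{f}$. By (K), $\kJX\opX\z=\kJX$, so the second entry is $1$ and the bound reduces to $\|(\x,\js)\|_{f}$. This is the one place where the BCK hypothesis is genuinely used.

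Finally, for (\ref{qcP47-251206-8}), fix $\js>0$. By (\ref{(FN2)}), $\|(\x,\jt)\|_{f}\leq\|(\x,\js)\|_{f}$ for all $0<\jt\leq\js$. Letting $\jt\to0^+$ gives $1\leq\|(\x,\js)\|_{f}\leq 1$, so $\|(\x,\js)\|_{f}=1$, and (\ref{(FN1)}) yields $\x=\kJX$. There is no real obstacle anywhere. The only points needing care are choosing the right specialization ($\z=\kJX$ versus $\y=\kJX$) in (FN4), and reading (FN1) pointwise in $\jt$ so that a single value equal to $1$ already forces $\x=\kJX$.
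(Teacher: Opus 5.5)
Your proposal is correct and follows the paper's proof essentially step for step. You use the same elementary appeals to (FN1), ($I_3$) and $\x\opX\kJX=\x$, the specialization $\z=\kJX$ in (FN4) for \eqref{qcP47-251206-7} and \eqref{qcP47-251206-5}, and $\y=\kJX$ with (K) for \eqref{qcP47-251206-6}; the only cosmetic differences are that you obtain \eqref{qcP47-251206-5} as a corollary of \eqref{qcP47-251206-7}, and you prove \eqref{qcP47-251206-8} directly via the monotone bound rather than by contradiction.
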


\begin{proof}
The condition \eqref{(FN1)} forces $\|(\kJX, \jt)\|_{f} =1$.
The identity \eqref{{a1}} induces \eqref{qcP47-251206-1a}.
The combination of ($I_3$) and \eqref{qcP47-251206-1} induces \eqref{qcP47-251206-2}.
If $\x\neq \kJX$, then $\|(\x, \jt)\|_{f}\neq 1$. The combination of \eqref{(FN3)} and the range in $[0,1]$ yields to $\|(\x, \jt)\|_{f} <1$.
 Let $\x, \y\in \LomX$ be such that $\x\leX \y$. Then $\xy =\kJX$.
	Setting $\z=\kJX$ in \eqref{(FN4)} induces
\begin{align*}
	\|(\x, \js+ \jt)\|_{f} &\stackrel{\eqref{{a1}}}{=}
\|(\x \opX \kJX, \js+ \jt)\|_{f}
\\&\geq \min \{\|(\xy, \js)\|_{f}, \|(\y\opX \kJX, \jt)\|_{f}\}
 \\&\stackrel{\eqref{{a1}}}{=} \min \{\|(\kJX, \js)\|_{f}, \|(\y, \jt)\|_{f}\}
 \\&\stackrel{\eqref{(FN1)}}{=} \min \{1, \|(\y, \jt)\|_{f}\}
 =\|(\y, \jt)\|_{f},
\end{align*}	
so \eqref{qcP47-251206-5} holds.
  If we take $\y=\kJX$ in \eqref{(FN4)}, then
 \begin{align*}
 \|(\xz, \js +\jt)\|_{f}  &\geq
 \min \{\|(\x\opX \kJX, \js)\|_{f}, \|(\kJX\opX \z, \jt)\|_{f}\}
 \\& \stackrel{(K) \, \& \, \eqref{{a1}}}{=}
 \min \{\|(\x, \js)\|_{f}, \|(\kJX, \jt)\|_{f}\}
 \\&\stackrel{\eqref{(FN1)}}{=}
 	\min \{\|(\x, \js)\|_{f}, 1\} =\|(\x, \js)\|_{f},
 \end{align*}
so \eqref{qcP47-251206-6} is valid.
  If we take $\z=\kJX$ in \eqref{(FN4)}, then
\begin{align*}
	\|(\x, \js+ \jt)\|_{f} &\stackrel{\eqref{{a1}}}{=}
		\|(\x\opX \kJX, \js+ \jt)\|_{f}
\\&\geq \min \{\|(\xy, \js)\|_{f}, \|(\y\opX \kJX, \jt)\|_{f}\}
\\ &\stackrel{\eqref{{a1}}}{=}
    \min \{\|(\xy, \js)\|_{f}, \|(\y, \jt)\|_{f}\}
\end{align*}
which proves \eqref{qcP47-251206-7}.
	Suppose $\underset{\jt \to 0^+} \lim \|(\x, \jt)\|_{f} =1$.
If $\x\neq \kJX$, then $\|(\x, \jt_0)\|_{f}<1$ for some $\jt_0 >0$ by \eqref{(FN1)}.
It follows from \eqref{(FN2)} that
$\|(\x, \jt)\|_{f} \leq \|(\x, \jt_0)\|_{f} <1$ for all $\jt \leq \jt_0$.
$\underset{\jt \to 0^+} \lim \|(\x, \jt)\|_{f} \neq 1$, a contradiction.
So \eqref{qcP47-251206-8} is valid.
\end{proof}

\begin{remark}\label{RqP47-251206}
Every fuzzy normed BCI-algebra $(\Xo0,$ $\| \cdot \|_{f})$ also satisfies
 \eqref{qcP47-251206-1}, \eqref{qcP47-251206-1a}, \eqref{qcP47-251206-2}, \eqref{qcP47-251206-3},   \eqref{qcP47-251206-5}, \eqref{qcP47-251206-7}, and
 \eqref{qcP47-251206-8}.
\end{remark}

The example below shows that
any fuzzy normed BCI-algebra $(\Xo0,$ $\| \cdot \|_{f})$ does not satisfy
\eqref{qcP47-251206-6}.

\begin{example}\label{EqP47-251206B}
(i)	Let $\LomX=\{0,1,2\}$ be a BCI-algebra whose operation $\op$ is given by
\begin{align*}
 \begin{array}{c|ccc}
 	\op & \0 & \1 & \2\\
 	\hline
 	\0 & \0 & \1 & \0\\
 	\1 & \1 & \0 & \1\\
 	\2 & \2 & \1 & \0
 \end{array}
\end{align*}
	 Define a mapping $\|\cdot\|_f:\LomX\times(0,\infty)\to [0,1]$ by
$\|(\0,\jt)\|_f =1,$ $\|(\1,\jt)\|_f =1-e^{-\jt},$ and $\|(\2,\jt)\|_f =1-\tfrac12 e^{-\jt}$
for all $\jt>0$.
   Then $\|\cdot\|_{f}$ is a fuzzy BCI-norm on $\LomX$, but it does not satisfy \eqref{qcP47-251206-6}.
 Indeed, if we take $\x=\2$, $\z=\1$, $\js=0.1$, and $\jt=0.1$, then
 \begin{align*}
 \|(\xz, \js + \jt)\|_{f} &=\|(\2 \opX \1, 0.1+0.1)\|_{f} =\|(\1, 0.2)\|_{f}
  \\&=1-e^{-0.2}\approx 0.18127 \ngeq 0.54758 \approx 1-\tfrac{1}{2} e^{-0.1}
 \\& =\|(\2, 0.1)\|_{f} =\|(\x, \js)\|_{f}.
\end{align*}

(ii)	
	Let $\LomX=\{\0,\1,\2\}$ be a set with the binary operation $\op$ given by
\begin{align*}
	\begin{array}{c|ccc}
		\op & \0 & \1 & \2\\
		\hline
		\0 & \0 & \2 & \1\\
		\1 & \1 & \0 & \2\\
		\2 & \2 & \1 & \0
	\end{array}
\end{align*}
Then $\Xo0 :=(\LomX, \op, \0)$ is a BCI-algebra (see \cite{H-Book, MJ-Book}).
Define a mapping
\begin{align*}
	\| \cdot \|_{f} : \LomX\times (0, \infty)\rightarrow [0,1],
	~(\x, \jt)\mapsto \begin{cases}
		1   & \text{\rm if $\x=\0$},\\
		e^{-\tfrac{1}{\jt}} & \text{\rm if $\x \in \{\1, \2\}$}.
	\end{cases}
\end{align*}
It is routine to verify that $\| \cdot \|_{f}$ is a fuzzy BCI-norm on $\Xo0$,
but it does not satisfy \eqref{qcP47-251206-6} since
\begin{align*}
 \|(\1 \op \0, 2 +3)\|_{f} =\|(\1, 5)\|_{f} =e^{-\tfrac{1}{5}}<e^{-\tfrac{1}{2}}
 =\|(\1, 2)\|_{f}.
\end{align*}
\end{example}

\begin{proposition}\label{PqGV90-251107}
Every fuzzy normed BCK-algebra $(\Xo0,$ $\| \cdot \|_{f})$ satisfies
\begin{align}
	\label{qcGV90-251107}
\|(\xy, \js +\jt)\|_{f}\geq \min \{\|(\x, \js)\|_{f}, \|(\y, \jt)\|_{f}\},
\end{align}
for all $\x, \y \in \LomX$ and $\jt, \js \in (0, \infty)$.
\end{proposition}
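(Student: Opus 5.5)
The plan is to obtain \eqref{qcGV90-251107} directly from \eqref{qcP47-251206-6} in Proposition~\ref{PqP47-251206}, which already bounds $\|(\xz, \js+\jt)\|_{f}$ from below by $\|(\x,\js)\|_{f}$ alone. Rename the variable in that inequality: with $\z$ replaced by $\y$, \eqref{qcP47-251206-6} reads
\begin{align*}
\|(\xy, \js+\jt)\|_{f} \geq \|(\x, \js)\|_{f}
\end{align*}
for all $\x,\y\in\LomX$ and $\js,\jt>0$. Since $\|(\x,\js)\|_{f} \geq \min\{\|(\x,\js)\|_{f}, \|(\y,\jt)\|_{f}\}$ trivially, the claimed inequality follows at once.

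To make the argument self-contained, I would also rederive the key step from \eqref{(FN4)}. Apply \eqref{(FN4)} with the middle element $\kJX$ and the pair $(\x,\y)$ in the outer positions:
\begin{align*}
\|(\xy, \js+\jt)\|_{f} \geq \min\{\|(\x\opX\kJX,\js)\|_{f}, \|(\kJX\opX\y,\jt)\|_{f}\}.
\end{align*}
By \eqref{{a1}} the first term is $\|(\x,\js)\|_{f}$. By (K) the second term is $\|(\kJX,\jt)\|_{f}=1$, using \eqref{qcP47-251206-1}. Hence the right-hand side equals $\|(\x,\js)\|_{f}$, which dominates the required minimum.

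There is no real obstacle. The only point to watch is that the BCK axiom (K) is genuinely used: it collapses $\kJX\opX\y$ to $\kJX$. This is consistent with Example~\ref{EqP47-251206B}, where the analogous statement fails in BCI-algebras. The proof in fact gives the slightly stronger bound by $\|(\x,\js)\|_{f}$ alone, and the $\min$ form is a weakening of it.
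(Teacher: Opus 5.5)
Your proof is correct and matches the paper's: the paper also obtains \eqref{qcGV90-251107} directly from \eqref{qcP47-251206-6} and then weakens it to the minimum. Your self-contained rederivation, which puts $\kJX$ in the middle slot of \eqref{(FN4)} and uses (K) together with $\x\opX\kJX=\x$, is exactly how the paper proves \eqref{qcP47-251206-6}.
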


\begin{proof}
Using \eqref{qcP47-251206-6} forces
\begin{align*}
	\|(\xy, \js +\jt)\|_{f}\geq \|(\x, \js)\|_{f}
	\geq \min \{\|(\x, \js)\|_{f}, \|(\y, \jt)\|_{f}\},
\end{align*}
so \eqref{qcGV90-251107} is valid.
\end{proof}

The example below shows that
any fuzzy normed BCI-algebra $(\Xo0,$ $\| \cdot \|_{f})$ does not satisfy
\eqref{qcGV90-251107}.

\begin{example}\label{EqGV20-251207}
Consider a BCI-algebra $\Xo0 :=(\LomX, \opX, 0)$ where $\LomX =\mathbb{Z}$ (integers) and the operation $\opX$ is defined  by
$\xy =\x -\y$ for all $\x, \y \in \LomX$.
Let $N : \LomX \rightarrow [0, \infty)$ be a mapping defined by
\begin{align*}
 N(\x)=\begin{cases}
 	|\x|   & \text{\rm if $\x \geq 0$},\\
 	4|\x|  & \text{\rm if $\x <0$}.
 \end{cases}
\end{align*}
Define a mapping
\begin{align*}
	\| \cdot \|_{f} : \LomX\times (0, \infty)\rightarrow [0,1],
	~(\x, \jt)\mapsto \tfrac{\jt}{N(\x)+\jt}.
\end{align*}
It is routine to verify that $(\Xo0, \| \cdot \|_{f})$ is a fuzzy normed BCI-algebra.
  		  Then
\begin{align*}
&\|(0\opX 1, 1+2)\|_{f}=\|(-1, 3)\|_{f}=\tfrac{3}{N(-1)+3}=\tfrac{3}{4+3}=\tfrac{3}{7},
\\& \|(0, 1)\|_{f}=\tfrac{1}{N(0)+1}=\tfrac{1}{0+1}=1,
\\& \|(1, 2)\|_{f}=\tfrac{2}{N(1)+2}=\tfrac{2}{1+2}=\tfrac{2}{3},
\end{align*}
so $\|(0\opX 1, 1+2)\|_{f}=\tfrac{3}{7} <\tfrac{2}{3}=\min \{1, \tfrac{2}{3}\}
=\min \{\|(0, 1)\|_{f}, \|(1, 2)\|_{f}\}.$
Hence \eqref{qcGV90-251107} is not valid.	
\end{example}

We have a question: If a mapping
$\| \cdot \|_{f} : \LomX \times (0, \infty) \rightarrow [0, 1]$ satisfies
\eqref{(FN1)}, \eqref{(FN2)}, \eqref{(FN3)}, and \eqref{qcGV90-251107}, then
is $(\LomX, \| \cdot \|_{f})$ a fuzzy normed BCK/BCI-algebra?
But the answer to this question is negative
because the condition \eqref{(FN4)} cannot be derived under the four conditions given.
An example given below illustrates this.

\begin{example}\label{EqGV20-250908}
Consider a BCK/BCI-algebra $\Xo0 :=(\LomX, \op, \0)$ where $\LomX =\{\0, \1, \2\}$ and the operation $\op$ is given by the following Cayley table:
	\begin{align*}\begin{array}{c|ccc}
			\opX & \0 & \1 & \2  \\
			\hline
			\0 & \0 & \0  & \0 \\
			\1 & \1 & \0  & \0 \\
			\2 & \2 & \1  & \0 \\
	\end{array}\end{align*}
	Define a mapping $\| \cdot \|_{f} : \LomX\times (0, \infty)\rightarrow [0,1]$ by
	\begin{align*}
		\| \cdot \|_{f}(\x, \jt)= \begin{cases}
			1   & \text{\rm if $\x=\0$},\\
			0.8   & \text{\rm if $\x=\1$},\\
			0.3   & \text{\rm if $\x=\2$}.\\
		\end{cases}
	\end{align*}
It is routine to confirm that $\| \cdot \|_{f}$ satisfies the conditions
\eqref{(FN1)}, \eqref{(FN2)}, \eqref{(FN3)}, and \eqref{qcGV90-251107}.
	But
\begin{align*}
& \|(\2\opX \0, \js +\jt)\|_{f} =\|(\2, \js +\jt)\|_{f}=0.3,
\\& \|(\2\opX \1, \js)\|_{f} =\|(\1, \js)\|_{f}=0.8,
\\& \|(\1\opX \0, \jt)\|_{f} =\|(\1, \jt)\|_{f}=0.8,
\end{align*}
which shows that
$\|(\2\opX \0, \js +\jt)\|_{f}\ngeq \min \{\|(\2\opX \1, \js)\|_{f}, \|(\1\opX \0, \jt)\|_{f}\},$
i.e., \eqref{(FN4)} is not valid. Hence
$(\LomX, \| \cdot \|_{f})$ is not a fuzzy normed BCK/BCI-algebra.
\end{example}

\begin{proposition}\label{PqP47-251207}
	Every fuzzy normed BCK/BCI-algebra $(\Xo0,$ $\| \cdot \|_{f})$ satisfies the following assertions:
	\begin{enumerate}
		\item[\rm (c1)]
		For each $\x\in \LomX$,
		the mapping $\jt \mapsto \|(\x, \jt)\|_{f}$ is non-decreasing on $(0, \infty)$.
		\item[\rm (c2)] If $\jt =\tfrac{\jt}{n} +\tfrac{(n-1)\jt}{n}$ for $n \in \mathbb{N}$,
		then
		\begin{align*}
			\x\leX \y ~\Rightarrow ~ \|(\x, \jt)\|_{f} \geq  \|(\y, \tfrac{(n-1)\jt}{n})\|_{f}.
		\end{align*}
	\end{enumerate}
\end{proposition}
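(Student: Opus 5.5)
The plan is to derive both assertions directly from the axioms and from \eqref{qcP47-251206-5}, which holds for BCK and BCI alike by Proposition~\ref{PqP47-251206} and Remark~\ref{RqP47-251206}.

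For (c1), take $0<\js\leq \jt$. Axiom \eqref{(FN2)} gives $\|(\x,\js)\|_{f}\leq \|(\x,\jt)\|_{f}$, which is exactly the statement that $\jt\mapsto\|(\x,\jt)\|_{f}$ is non-decreasing on $(0,\infty)$. If one wants an argument from \eqref{(FN4)} alone, the case $\js<\jt$ can also be obtained by writing $\jt=\js+(\jt-\js)$. Then apply \eqref{(FN4)} with $\y=\x$, $\z=\kJX$, and use $\x\opX\kJX=\x$ and $\|(\xx,\cdot)\|_{f}=1$ from \eqref{qcP47-251206-2}:
\begin{align*}
\|(\x,\jt)\|_{f}\geq \min\{\|(\kJX,\jt-\js)\|_{f},\|(\x,\js)\|_{f}\}=\|(\x,\js)\|_{f}.
\end{align*}
Either route suffices; I would cite \eqref{(FN2)} directly.

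For (c2), let $\x\leX\y$ and $n\in\mathbb{N}$. The case $n=1$ needs a separate look, since then $\tfrac{(n-1)\jt}{n}=0\notin(0,\infty)$ and the right-hand side is undefined. So I would assume $n\geq 2$, as the decomposition implicitly requires. Then $\js:=\tfrac{\jt}{n}>0$ and $\tfrac{(n-1)\jt}{n}>0$, and \eqref{qcP47-251206-5} applied with this $\js$ and second parameter $\tfrac{(n-1)\jt}{n}$ gives
\begin{align*}
\|(\x,\jt)\|_{f}=\|(\x,\tfrac{\jt}{n}+\tfrac{(n-1)\jt}{n})\|_{f}\geq \|(\y,\tfrac{(n-1)\jt}{n})\|_{f}.
\end{align*}
If one prefers to spell the argument out, it is the computation in the proof of \eqref{qcP47-251206-5}. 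One takes $\z=\kJX$ in \eqref{(FN4)} and uses $\xy=\kJX$, \eqref{{a1}}, and \eqref{(FN1)}, all of which are valid in BCI-algebras.

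There is no real obstacle. The only point needing care is the degenerate case $n=1$ in (c2), which I would exclude explicitly, or else interpret via (c1).
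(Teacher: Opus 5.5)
Your proof is correct and follows the paper's route: (c1) is read off from \eqref{(FN2)}, and (c2) is \eqref{(FN4)} specialized to $\z=\kJX$ (that is, \eqref{qcP47-251206-5} with $\js=\tfrac{\jt}{n}$). The paper writes that computation out in full rather than citing \eqref{qcP47-251206-5}. Your remark that $n=1$ must be excluded, since $\tfrac{(n-1)\jt}{n}=0\notin(0,\infty)$, is a legitimate point that the paper's proof passes over silently.
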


\begin{proof}
	(c1) It is directly induced by \eqref{(FN1)} and \eqref{(FN2)}.
	
	(c2) Let $\x, \y\in \LomX$ be such that $\x\leX \y$, so $\xy =\kJX$, and $\jt =\tfrac{\jt}{n} +\tfrac{(n-1)\jt}{n}$
	for $n \in \mathbb{N}$. Then
	\begin{align*}
		\|(\x, \jt)\|_{f} &\stackrel{\eqref{{a1}}}{=}
		\|(\x\opX \kJX, \tfrac{\jt}{n} +\tfrac{(n-1)\jt}{n})\|_{f}
		\\& \stackrel{\eqref{(FN4)}}{\geq}
		\min \{\|(\xy, \tfrac{\jt}{n})\|_{f}, \|(\y\opX \kJX, \tfrac{(n-1)\jt}{n})\|_{f}\}
		\\& \stackrel{\eqref{{a1}}}{=}
		\min \{\|(\kJX, \tfrac{\jt}{n})\|_{f}, \|(\y, \tfrac{(n-1)\jt}{n})\|_{f}\}
		\\& \stackrel{\eqref{(FN1)}}{=}
		\min \{1, \|(\y, \tfrac{(n-1)\jt}{n})\|_{f}\}
		\\&=\|(\y, \tfrac{(n-1)\jt}{n})\|_{f}.
	\end{align*}
	This completes the proof.
\end{proof}

\begin{proposition}\label{PqGV70} {\rm (Chained triangle inequality)}
If  $(\Xo0,$ $\| \cdot \|_{f})$ is a fuzzy normed BCK/BCI-algebra, then
for every finite sequences $\x_1, \x_2, \cdots, \x_n$ in $\LomX$ $(n\geq 3)$
and $\jt_1, \jt_2, \cdots, \jt_{n-1} \in (0, \infty)$,
\begin{align*}
 &\|(\x_1\opX \x_n, \jt_1 +\jt_2 + \cdots +\jt_{n-1})\|_{f}
  \\&\geq \min \{\|(\x_1\opX \x_2, \jt_1)\|_{f}, \|(\x_2\opX \x_3, \jt_2)\|_{f}, \cdots,
      \|(\x_{n-1}\opX \x_n, \jt_{n-1})\|_{f} \}.
\end{align*}
\end{proposition}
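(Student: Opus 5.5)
We argue by induction on $n\geq 3$, using only condition \eqref{(FN4)} repeatedly. For the base case $n=3$, the claimed inequality reads
\begin{align*}
\|(\x_1\opX \x_3, \jt_1+\jt_2)\|_{f}\geq \min\{\|(\x_1\opX \x_2, \jt_1)\|_{f}, \|(\x_2\opX \x_3, \jt_2)\|_{f}\}.
\end{align*}
This is exactly \eqref{(FN4)} with $\x=\x_1$, $\y=\x_2$, $\z=\x_3$, $\js=\jt_1$ and $\jt=\jt_2$. No BCK-specific identity such as (K) is needed, so the argument covers BCK- and BCI-algebras simultaneously.

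For the inductive step, we assume the statement holds for some $n\geq 3$ and any sequences of that length. Given $\x_1,\dots,\x_{n+1}$ and $\jt_1,\dots,\jt_n$, we set $T=\jt_1+\cdots+\jt_{n-1}>0$. We apply \eqref{(FN4)} with $\x=\x_1$, $\y=\x_n$, $\z=\x_{n+1}$, $\js=T$ and $\jt=\jt_n$, which gives
\begin{align*}
\|(\x_1\opX \x_{n+1}, T+\jt_n)\|_{f}\geq \min\{\|(\x_1\opX \x_n, T)\|_{f}, \|(\x_n\opX \x_{n+1}, \jt_n)\|_{f}\}.
\end{align*}
We then bound the first term inside the minimum from below by the induction hypothesis applied to $\x_1,\dots,\x_n$ and $\jt_1,\dots,\jt_{n-1}$. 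Since $\min$ is monotone in each argument and associative, the right-hand side is at least the minimum of all $n$ terms $\|(\x_i\opX \x_{i+1},\jt_i)\|_{f}$, which is the claim for $n+1$.

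There is no genuine obstacle here. The only point needing care is bookkeeping: the parameter $T$ must be a positive real, which holds because each $\jt_i>0$, and the splitting $T+\jt_n$ must match the total $\jt_1+\cdots+\jt_n$. One could alternatively start the induction at $n=2$, where the statement is trivially an equality, but starting at $n=3$ matches the statement as given.
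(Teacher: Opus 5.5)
Your proof is correct and matches the paper's argument: the base case $n=3$ is (FN4) itself, and the inductive step applies (FN4) with middle element $x_n$ and parameters $T=t_1+\cdots+t_{n-1}$ and $t_n$, then bounds the first term of the minimum using the induction hypothesis.
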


\begin{proof}
The proof will be proceeded by induction on $n$.
For $n=3$, it is direct from \eqref{(FN4)}. Suppose it holds for $k=n-1$, that is,
\begin{align*}
	&\|(\x_1\opX \x_{n-1}, \jt_1 +\jt_2 + \cdots +\jt_{n-2})\|_{f}
	\\&\geq \min \{\|(\x_1\opX \x_2, \jt_1)\|_{f}, \|(\x_2\opX \x_3, \jt_2)\|_{f}, \cdots,
	\|(\x_{n-2}\opX \x_{n-1}, \jt_{n-2})\|_{f} \}.
\end{align*}
It follows from the induction hypothesis that
\begin{align*}
	&\|(\x_1\opX \x_n, (\jt_1 +\jt_2 + \cdots +\jt_{n-2})+\jt_{n-1})\|_{f}
  \\&\geq \min \{\|(\x_1\opX \x_{n-1}, \jt_1 +\jt_2 + \cdots +\jt_{n-2})\|_{f},
        \|(\x_{n-1}\opX \x_n, \jt_{n-1})\|_{f}\}
	\\&\geq \min \{\|(\x_1\opX \x_2, \jt_1)\|_{f}, \|(\x_2\opX \x_3, \jt_2)\|_{f}, \cdots,
	\|(\x_{n-1}\opX \x_n, \jt_{n-1})\|_{f} \}.
\end{align*}
This completes the proof.
\end{proof}

We have the following  questions:
\begin{enumerate}
	\item[\rm (Q1)] If $\x\leX \y$, then $\|(\x, \jt)\|_{f}\geq \|(\y, \jt)\|_{f}$
for all $\jt \in (0, \infty)$?
	\item[\rm (Q2)] If $\x\leX \y$, then  $\|(\x, \jt)\|_{f}\leq \|(\y, \jt)\|_{f}$
	for all $\jt \in (0, \infty)$?
\end{enumerate}

In the following example, we provide the negative answer to (Q1) and (Q2).

\begin{example}\label{Eq[A3-2]}
(i) Consider a BCK/BCI-algebra $\Xo0 :=(\LomX, \op, \0)$ where $\LomX =\{\0, \1, \2\}$ and the operation $\op$ is given by the following Cayley table:
\begin{align*}\begin{array}{c|ccc}
		\op & \0 & \1 & \2  \\
		\hline
		\0 & \0 & \0  & \0 \\
		\1 & \1 & \0  & \0 \\
		\2 & \2 & \2  & \0 \\
\end{array}\end{align*}
Define a mapping $\| \cdot \|_{f} : \LomX\times (0, \infty)\rightarrow [0,1]$ by
\begin{align*}
	\| \cdot \|_{f}(\x, \jt)= \begin{cases}
		1   & \text{\rm if $\x=0$},\\
		0.5 & \text{\rm if $\x =\1$ and $\jt\leq 1$}, \\
		0.7 +0.3(1-\tfrac{1}{e^{t-1}}) & \text{\rm if $\x =\1$ and $\jt > 1$}, \\
		0.5 & \text{\rm if $\x =\2$ and $\jt < 1$}, \\
		0.6 & \text{\rm if $\x =\2$ and $\jt = 1$}, \\
		0.7 +0.3(1-\tfrac{1}{e^{t-1}}) & \text{\rm if $\x =\2$ and $\jt > 1$}.
	\end{cases}
\end{align*}
It is routine to confirm that $(\Xo0,$ $\| \cdot \|_{f})$ is a fuzzy normed BCK/BCI-algebra.
Note that $\1\leX \2$, but if $\jt =1$, then $\| \cdot \|_{f}(\1, 1)=0.5<0.6=\| \cdot \|_{f}(\2, 1)$.
Hence $\| \cdot \|_{f}(\x, \jt)\ngeq \| \cdot \|_{f}(\y, \jt)$ for $\x=\1,$ $\y=\2$ and $\jt =1$.

(ii) Consider the fuzzy BCI-norm $\| \cdot \|_{f}$ in Example \ref{Eq[A2-1]}(i).
Note that $\0\leX \1$, but
$\| (\0, \jt) \|_{f} =1 >1-\tfrac{1}{e^{t}}=\| (\1, \jt) \|_{f}$,
which shows that the answer to (Q2) is negative.
\end{example}

\begin{question}\label{QqGV90-251208}
	Let \kXo0 and \kYo0  be BCK/BCI-algebras, and let
$\kbp : \LomX \rightarrow \LomY$ be a  homomorphism.  	
Consider two mappings  $\| \cdot \|_{f}: \LomX\times (0, \infty)\rightarrow [0,1]$ and
$\| \cdot \|_{g} : \LomY\times (0, \infty)\rightarrow [0,1]$ such that
 $\| \cdot \|_{f} =\| \cdot \|_{g} \circ \kbp$, i.e.,
 $\|(\x, \jt)\|_{f} =\|(\kbp(\x), \jt)\|_{g}$ for all $\x\in \LomX$ and $\jt >0$.

(i) If $\| \cdot \|_{f}$ is a fuzzy BCK/BCI-norm on $\Xo0$, then is
$\| \cdot \|_{g}$  a fuzzy BCK/BCI-norm on $\Yo0$?

(ii) If $\| \cdot \|_{g}$ is a fuzzy BCK/BCI-norm on $\Yo0$, then is
$\| \cdot \|_{f}$  a fuzzy BCK/BCI-norm on $\Xo0$?
\end{question}

The examples below give a negative answer to the Question \ref{QqGV90-251208}.

\begin{example}\label{EqE38-251208}
 (i) Consider two BCK-algebras $\Xo0$ $:=$ $(\LomX,$ $\opX,$ $\0)$ and
$\Yo0$ $:=$ $(\LomY,$ $\opY,$ $\0)$
where $\LomX =\{\0, \1\}$ and $\LomY =\{\0, \2, \3\}$, and
the operations $\opX$ and $\opY$ are  given by the following Cayley tables:
\begin{align*}
	\begin{array}{c|cc}
		\opX & \0 & \1  \\
		\hline
		\0 & \0 & \0  \\
		\1 & \1 & \0  \\
\end{array}
\hspace{10mm}
	\begin{array}{c|ccc}
	\opY & \0 & \2  & \3 \\
	\hline
	\0 & \0 & \0 & \0  \\
	\2 & \2 & \0 & \0  \\
	\3 & \3 & \2 & \0  \\
\end{array}
\end{align*}
Define a mapping $\kbp : \LomX \rightarrow \LomY$ by $\kbp(\0)=\0$ and $\kbp(\1)=\2$.
	Then $\kbp$ is a homomorphism.
	Define $\| \cdot \|_{f}: \LomX\times (0, \infty)\rightarrow [0,1]$ by
$\|(\0, \jt)\|_{f} =1$ and $\|(\1, \jt)\|_{f} =e^{-\tfrac{1}{\jt}}$ for every $\jt >0$.
	Then it is a fuzzy BCK-norm on $\Xo0$.
	If we give a mapping $\| \cdot \|_{g}: \LomY\times (0, \infty)\rightarrow [0,1]$ defined by
 $\|(\0, \jt)\|_{g}=1=\|(\3, \jt)\|_{g}$ and $\|(\2, \jt)\|_{g}=e^{-\tfrac{1}{\jt}}$
 for every $\jt >0$, then
$\|(\x, \jt)\|_{f} =\|(\kbp(\x), \jt)\|_{g}$ for all $\x\in \LomX$ and $\jt >0$.
But $\| \cdot \|_{g}$ is not a fuzzy BCK-norm on $\Yo0$ since
 $\|(\3, \jt)\|_{g}=1$ for all $\jt>0$ even though $\3 \neq \0$.

(ii)  Consider a BCI-algebra $\Xo0 :=(\LomX, \opX, 0)$ where $\LomX =\mathbb{Z}$ (integers) and the operation $\opX$ is defined  by
$\xy =\x -\y$ for all $\x, \y \in \LomX$, and let
$\Yo0 :=(\LomY =\{\kJY\}, \opX, \kJY)$ be the trivial BCI-algebra.
Then the zero mapping $\kbp : \LomX \rightarrow \LomY, ~\x\mapsto \kJX,$ is a homomorphism,
and  $\| \cdot \|_{g} : \LomY\times (0, \infty)\rightarrow [0,1], ~(\kJY, \jt)\mapsto 1,$
is a fuzzy BCI-norm on $\Yo0$.
Define a mapping $\| \cdot \|_{f}: \LomX\times (0, \infty)\rightarrow [0,1]$ by
$\|(\x, \jt)\|_{f} =\|(\kbp(\x), \jt)\|_{g}$ for all $\x\in \LomX$ and $\jt>0$.
Then $\|(7, \jt)\|_{f} =\|(\kbp(7), \jt)\|_{g}=\|(\kJY, \jt)\|_{g}=1$ and $7\neq 0$.
Hence $\| \cdot \|_{f}$ is not a fuzzy BCI-norm on $\Xo0$.
\end{example}

By strengthening the condition of homomorphism $\kbp$, we consider the following theorems.

\begin{theorem}\label{TqT39-251207}
	Let \kXo0 and \kYo0  be BCK/BCI-algebras, and let
$\kbp : \LomX \rightarrow \LomY$ be a  homomorphism.  	
Consider two mappings  $\| \cdot \|_{f}: \LomX\times (0, \infty)\rightarrow [0,1]$ and
$\| \cdot \|_{g} : \LomY\times (0, \infty)\rightarrow [0,1]$ such that
$\| \cdot \|_{f} =\| \cdot \|_{g} \circ \kbp$, i.e.,
$\|(\x, \jt)\|_{f} =\|(\kbp(\x), \jt)\|_{g}$ for all $\x\in \LomX$ and $\jt >0$.
\begin{enumerate}
	\item[\rm (i)]
If $\kbp$ is surjective and $\| \cdot \|_{f}$ is a fuzzy BCK/BCI-norm on $\Xo0$, then
   $\| \cdot \|_{g}$ is a fuzzy BCK/BCI-norm on $\Yo0$.
\item[\rm (ii)] If $\kbp$ satisfies
	\begin{align}\label{qcT39-251207}
	(\forall \x\in \LomX) (\kbp(\x)=\kJY ~\Rightarrow ~\x=\kJX)
	\end{align}
	and $\| \cdot \|_{g}$ is a fuzzy BCK/BCI-norm on $\Yo0$, then
	$\| \cdot \|_{f}$ is a fuzzy BCK/BCI-norm on $\Xo0$.
\item[\rm (iii)] If $\kbp$ is injective
and $\| \cdot \|_{g}$ is a fuzzy BCK/BCI-norm on $\Yo0$, then
$\| \cdot \|_{f}$ is a fuzzy BCK/BCI-norm on $\Xo0$.
\end{enumerate}
\end{theorem}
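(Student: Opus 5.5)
The plan is to verify the four axioms \eqref{(FN1)}--\eqref{(FN4)} directly, transporting each one along $\kbp$ by means of the identity $\|(\x, \jt)\|_{f} =\|(\kbp(\x), \jt)\|_{g}$. The preliminary fact used throughout is that every homomorphism preserves the constant:
\[
\kbp(\kJX)=\kbp(\kJX\opX \kJX)=\kbp(\kJX)\opY \kbp(\kJX)=\kJY
\]
by ($I_3$). The other structural fact is $\kbp(\x\opX \y)=\kbp(\x)\opY \kbp(\y)$, which converts every term appearing in \eqref{(FN4)} from one algebra to the other.

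For (i), fix $\y,\y',\y''\in \LomY$ and use surjectivity to choose preimages $\x,\x',\x''\in \LomX$.

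For \eqref{(FN1)}: we have $\|(\kJY,\jt)\|_{g}=\|(\kJX,\jt)\|_{f}=1$. Conversely, if $\|(\y,\jt)\|_{g}=1$, then $\|(\x,\jt)\|_{f}=1$, so $\x=\kJX$ and hence $\y=\kbp(\kJX)=\kJY$.

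For \eqref{(FN2)} and \eqref{(FN3)}: these are immediate, because $\jt\mapsto\|(\y,\jt)\|_{g}$ is literally the function $\jt\mapsto\|(\x,\jt)\|_{f}$.

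For \eqref{(FN4)}: we have $\y\opY\y''=\kbp(\x\opX\x'')$, and likewise for the other two products. The inequality \eqref{(FN4)} for $\| \cdot \|_{f}$ at $\x,\x',\x''$ and $\js,\jt$ therefore becomes exactly \eqref{(FN4)} for $\| \cdot \|_{g}$ at $\y,\y',\y''$. The value of $\| \cdot \|_{g}$ does not depend on which preimage is chosen, since it is determined by $\y$ itself, so there is no well-definedness issue.

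For (ii), the argument runs in the opposite direction and needs no preimages.

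For \eqref{(FN1)}: we have $\|(\kJX,\jt)\|_{f}=\|(\kJY,\jt)\|_{g}=1$. If $\|(\x,\jt)\|_{f}=1$, then $\|(\kbp(\x),\jt)\|_{g}=1$, so $\kbp(\x)=\kJY$, and condition \eqref{qcT39-251207} gives $\x=\kJX$.

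For \eqref{(FN2)} and \eqref{(FN3)}: these hold pointwise, by applying the corresponding axiom for $\| \cdot \|_{g}$ at $\kbp(\x)$.

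For \eqref{(FN4)}: write
\[
\|(\x\opX\z,\js+\jt)\|_{f}=\|(\kbp(\x)\opY\kbp(\z),\js+\jt)\|_{g},
\]
apply \eqref{(FN4)} in $\LomY$ with middle element $\kbp(\y)$, and then convert the two resulting terms back to $\| \cdot \|_{f}$ using the homomorphism property.

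For (iii), it suffices to show that an injective homomorphism satisfies \eqref{qcT39-251207}, and then invoke (ii). Suppose $\kbp(\x)=\kJY$. Since $\kJY=\kbp(\kJX)$, injectivity gives $\x=\kJX$.

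I do not expect a genuine obstacle here. The only point needing care is the nontrivial direction of \eqref{(FN1)} in each part. Example \ref{EqE38-251208} shows that this is exactly where the hypotheses are needed: surjectivity in (i), and the trivial-kernel condition \eqref{qcT39-251207} in (ii). Both rely on the preliminary fact $\kbp(\kJX)=\kJY$.
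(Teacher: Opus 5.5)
Your proposal is correct and follows the paper's proof essentially step by step. You use preimages via surjectivity for (i), transport (FN1)--(FN4) pointwise along $\kbp$ with the trivial-kernel condition settling (FN1) in (ii), and use the homomorphism identity for (FN4) in both; deriving (iii) from (ii) via injectivity and $\kbp(\kJX)=\kJY$ is just a tidier packaging of the paper's argument, which re-checks (FN1) with injectivity and then defers the remaining axioms to (ii).
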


\begin{proof}
(i) Let $\kbp$ be surjective and $\| \cdot \|_{f}$ be a fuzzy BCK/BCI-norm on $\Xo0$.
	Suppose $\|(\y, \jt)\|_{g}=1$ for all $\y\in \LomY$ and $\jt >0$.
	Then there exists $\x\in \LomX$ such that $\kbp(\x)=\y$, so
$1=\|(\y, \jt)\|_{g}=\|(\kbp(\x), \jt)\|_{g}=\|(\x, \jt)\|_{f}.$
It follows from the \eqref{(FN1)} for $\| \cdot \|_{f}$ that $\x=\kJX$.
Thus $\y=\kbp(\x)=\kbp(\kJX)=\kJY$.
Conversely, if $\y=\kJY$, then
$\|(\y, \jt)\|_{g}=\|(\kJY, \jt)\|_{g}=\|(\kbp(\kJX), \jt)\|_{g}
=\|(\kJX, \jt)\|_{f}=1$ by the \eqref{(FN1)} for $\| \cdot \|_{f}$.
Hence $\| \cdot \|_{g}$ satisfies \eqref{(FN1)}.
	Let $\y\in \LomY$ and $\jt \geq \js$ in $(0, \infty)$.
Then $\kbp(\x)=\y$ for some $\x\in \LomX$, so
\begin{align*}
 \|(\y, \jt)\|_{g}=\|(\kbp(\x), \jt)\|_{g}=\|(\x, \jt)\|_{f} \geq \|(\x, \js)\|_{f}
 =\|(\kbp(\x), \js)\|_{g}=\|(\y, \js)\|_{g}
\end{align*}
by the \eqref{(FN2)} for $\| \cdot \|_{f}$. Thus  $\| \cdot \|_{g}$ satisfies \eqref{(FN2)}.
	For every $\y\in \LomY$ and $\jt \in (0, \infty)$, if we take $\x\in \LomX$ with $\kbp(\x)=\y$,
	then
\begin{align*}
 \underset{\jt \to \infty} \lim \|(\y, \jt)\|_{g}
 =\underset{\jt \to \infty} \lim \|(\kbp(\x), \jt)\|_{g}
= \underset{\jt \to \infty} \lim \|(\x, \jt)\|_{f}=1
\end{align*}
by the \eqref{(FN3)} for $\| \cdot \|_{f}$, which shows that
$\| \cdot \|_{g}$ satisfies \eqref{(FN3)}.
	Let $\x, \y, \z \in \LomY$ and $\js, \jt \in (0,\infty)$. Then
	there are $\kx, \ky, \kz \in \LomX$ such that $\kbp(\kx)=\x,$ $\kbp(\ky)=\y$ and $\kbp(\kz)=\z$.
Using the  \eqref{(FN4)} for $\| \cdot \|_{f}$ and the fact that $\kbp$ is a homomorphism induces
\begin{align*}
\|(\x \opY z, \js +\jt)\|_{g}&=\|(\kbp(\kx)\opY \kbp(\kz), \js +\jt)\|_{g}
 \\&=\|(\kbp(\kx \opX \kz), \js +\jt)\|_{g}
 \\&=\|(\kx \opX \kz, \js +\jt)\|_{f}
 \\&\geq \min \{\|(\kx \opX \ky, \js)\|_{f}, \|(\ky \opX \kz, \jt)\|_{f}\}
 \\&= \min \{\|(\kbp(\kx \opX \ky), \js)\|_{g}, \|(\kbp(\ky \opX \kz), \jt)\|_{g}\}
 \\&= \min \{\|(\kbp(\kx) \opY \kbp(\ky)), \js)\|_{g}, \|(\kbp(\ky) \opY \kbp(\kz)), \jt)\|_{g}\}
 \\&= \min \{\|(\x \opY \y, \js)\|_{g}, \|(\y \opY \z, \jt)\|_{g}\},
\end{align*}
so $\| \cdot \|_{g}$ satisfies \eqref{(FN4)}. Therefore
$\| \cdot \|_{g}$ is a fuzzy BCK/BCI-norm on $\Yo0$.

(ii) Suppose $\kbp$ satisfies \eqref{qcT39-251207} and let
$\| \cdot \|_{g}$ be a fuzzy BCK/BCI-norm on $\Yo0$.
	Assume that $\|(\x, \jt)\|_{f}=1$ for all $\x\in \LomX$ and $\jt \in (0, \infty)$.
Then
 $\|(\kbp(\x), \jt)\|_{g}=\|(\x, \jt)\|_{f}=1$, so
 $\kbp(\x)=\kJY$ by the \eqref{(FN1)} for $\| \cdot \|_{g}$.
Hence $\x=\kJX$ by \eqref{qcT39-251207}.
Conversely, if $\x=\kJX$, then
$\|(\x, \jt)\|_{f}=\|(\kJX, \jt)\|_{f}=\|(\kbp(\kJX), \jt)\|_{g}
 =\|(\kJY, \jt)\|_{g}=1$
by the \eqref{(FN1)} for $\| \cdot \|_{g}$. So $\| \cdot \|_{f}$ satisfies \eqref{(FN1)}.
	For every $\x\in \LomX$ and $\js, \jt \in (0, \infty)$ with $\jt \geq \js$, we have
\begin{align*}
\|(\x, \js)\|_{f} =\|(\kbp(\x), \js)\|_{g}\leq \|(\kbp(\x), \jt)\|_{g}=\|(\x, \js)\|_{f}
\end{align*}	
by the \eqref{(FN2)} for $\| \cdot \|_{g}$, and
\begin{align*}
\underset{\jt \to \infty}\lim \|(\x, \jt)\|_{f}
  =\underset{\jt \to \infty}\lim \|(\kbp(\x), \jt)\|_{g}=1
\end{align*}
by the \eqref{(FN3)} for $\| \cdot \|_{g}$.
 Let $\x, \y, \z\in \LomX$ and $\js, \jt \in (0, \infty)$. Then
\begin{align*}
\|(\xz, \js +\jt)\|_{f}&=\|(\kbp(\xz), \js +\jt)\|_{g}
  =\|(\kbp(\x)\opY \kbp(\z), \js +\jt)\|_{g}
  \\& \geq \min \{\|(\kbp(\x)\opY \kbp(\y), \js)\|_{g}, \|(\kbp(\y)\opY \kbp(\z), \jt)\|_{g}\}
  \\& \geq \min \{\|(\kbp(\xy), \js)\|_{g}, \|(\kbp(\yz), \jt)\|_{g}\}
\\&=\min \{\|(\xy, \js)\|_{f}, \|(\yz, \jt)\|_{f}\}
\end{align*}
by the \eqref{(FN4)} for $\| \cdot \|_{g}$ and the fact that $\kbp$ is a homomorphism.
Consequently, $\| \cdot \|_{f}$ is a fuzzy BCK/BCI-norm on $\Xo0$.

(iii) Assume that  $\kbp$ is injective
and $\| \cdot \|_{g}$ is a fuzzy BCK/BCI-norm on $\Yo0$.
	Let $\x\in \LomX$ and   $\jt \in (0, \infty)$.
If $\|(\x, \jt)\|_{f}=1$, then $\|(\kbp(\x), \jt)\|_{g}=1$ so $\kbp(\x)=\kJY$ by the \eqref{(FN1)} for
$\| \cdot \|_{g}$. Since $\kbp$ is injective, it follows that $\x=\kJX$.
If $\x=\kJX$, then $\|(\x, \jt)\|_{f}=\|(\kJX, \jt)\|_{f}=\|(\kbp(\kJX), \jt)\|_{g}
=\|(\kJY, \jt)\|_{g}=1$
by the \eqref{(FN1)} for $\| \cdot \|_{g}$ and the fact that $\kbp$ is a homomorphism.
 So $\| \cdot \|_{f}$ satisfies \eqref{(FN1)}.
Hence $\| \cdot \|_{f}$ satisfies \eqref{(FN1)}.
We can see that  $\| \cdot \|_{f}$ satisfies \eqref{(FN2)}, \eqref{(FN3)} and \eqref{(FN4)}
in a similar way to the proof of (ii).
Therefore $\| \cdot \|_{f}$ is a fuzzy BCK/BCI-norm on $\Xo0$.
\end{proof}

\begin{theorem}\label{TqT314-251207}
Every BCK/BCI-algebra \kXo0 admits a fuzzy normed BCK/BCI-algebra $(\Xo0, \|\cdot \|_{f})$.
\end{theorem}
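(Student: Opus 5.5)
We construct an explicit fuzzy norm that works on every BCK/BCI-algebra without using any further structure. Fix a function $\varphi:(0,\infty)\to[0,1)$ that is non-decreasing, takes values strictly less than $1$, and satisfies $\lim_{\jt\to\infty}\varphi(\jt)=1$. A concrete choice is $\varphi(\jt)=\tfrac{\jt}{1+\jt}$, or $1-e^{-\jt}$ as in Example \ref{Eq[A2-1]}(i). We then define
\begin{align*}
\|(\x,\jt)\|_{f}=\begin{cases} 1 & \text{if } \x=\kJX,\\ \varphi(\jt) & \text{if } \x\neq\kJX,\end{cases}
\end{align*}
and verify \eqref{(FN1)}--\eqref{(FN4)} in turn.

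Conditions \eqref{(FN1)}, \eqref{(FN2)} and \eqref{(FN3)} are immediate.
\begin{itemize}
\item[] For \eqref{(FN1)}, $\varphi<1$, so the value $1$ is attained exactly at $\x=\kJX$.
\item[] For \eqref{(FN2)}, both branches are non-decreasing in $\jt$: one is constant and $\varphi$ is non-decreasing.
\item[] For \eqref{(FN3)}, the constant branch is trivially $1$, and the other branch tends to $1$ by the choice of $\varphi$.
\end{itemize}

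The only real step is \eqref{(FN4)}, and it is settled by a case split on whether $\xz=\kJX$.
\begin{itemize}
\item[] If $\xz=\kJX$, the left side equals $1$ and there is nothing to prove.
\item[] If $\xz\neq\kJX$, the left side is $\varphi(\js+\jt)$. Each term in the minimum on the right is either $1$ or $\varphi(\js)$ or $\varphi(\jt)$. If at least one of $\xy$, $\yz$ is nonzero, the minimum is at most $\max\{\varphi(\js),\varphi(\jt)\}\le\varphi(\js+\jt)$ by monotonicity.
\end{itemize}

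The remaining subcase is the main obstacle: $\xy=\kJX=\yz$ with $\xz\neq\kJX$. Then the right side equals $1$ while the left side is less than $1$, so we must show this subcase cannot occur. Indeed, $\xy=\kJX$ and $\yz=\kJX$ mean $\x\leX\y$ and $\y\leX\z$. By \eqref{{a2}}, $\x\leX \y$ gives $\xz\leX\yz=\kJX$, that is, $(\xz)\opX\kJX=\kJX$. By \eqref{{a1}} the left-hand side is $\xz$, so $\xz=\kJX$, a contradiction. This is transitivity of $\leX$, valid in every BCI-algebra and hence in every BCK-algebra.

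All four axioms therefore hold, and $(\Xo0,\|\cdot\|_f)$ is a fuzzy normed BCK/BCI-algebra.
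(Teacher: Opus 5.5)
Your proof is correct and follows the paper's own construction: the paper takes the special case $\varphi(t)=\tfrac{t}{t+1}$ and runs the same case analysis for (FN4), with transitivity of the induced order ruling out the one problematic subcase. Your version is slightly more general, and it is also more careful where exactly one of $x*y$, $y*z$ is nonzero; there the paper loosely writes the minimum as $\min\{\tfrac{s}{s+1},\tfrac{t}{t+1}\}$, whereas you bound it correctly by $\max\{\varphi(s),\varphi(t)\}\le\varphi(s+t)$.
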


\begin{proof}
Define a mapping
\begin{align*}
	\| \cdot \|_{f} : \LomX\times (0, \infty)\rightarrow [0,1],
	~(\x, \jt)\mapsto \begin{cases}
		1   & \text{\rm if $\x=\kJX$},\\
		\tfrac{\jt}{\jt +1} & \text{\rm if $\x \neq \kJX$}.
	\end{cases}
\end{align*}
It is clear that for all $\x\in \LomX$ and $\jt \in (0, \infty)$,
 $\|(\x, \jt)\|_{f}=1$ if and only if $\x=\kJX$.
	Let $\x\in \LomX$ and $\jt \geq \js$ in $(0, \infty)$.
If $\x=\kJX$, then $\|(\x, \jt)\|_{f} =1=\|(\x, \js)\|_{f}$.
If $\x\neq \kJX$, then 	
	$\|(\x, \jt)\|_{f}=\tfrac{\jt}{\jt +1} \geq \tfrac{\js}{\js +1}=\|(\x, \js)\|_{f}$.
Thus 	$\| \cdot \|_{f}$ satisfies \eqref{(FN2)}.
	It is clear that if $\x=\kJX$, then
 $\underset{\jt \to \infty} \lim \|(\x, \jt)\|_{f} =1$. 	
If $\x \neq \kJX$, then 	
\begin{align*}
\underset{\jt \to \infty} \lim \|(\x, \jt)\|_{f}
   =\underset{\jt \to \infty} \lim \tfrac{\jt}{\jt +1}
   =\underset{\jt \to \infty} \lim \tfrac{1}{1+\tfrac{1}{\jt}} =1.
   \end{align*}
Hence   	$\| \cdot \|_{f}$ satisfies \eqref{(FN3)}.
 Let $\x, \y, \z\in \LomX$ and $\js, \jt \in (0, \infty)$. Then
If $\xz =0$, then 	
$\|(\xz, \js + \jt)\|_{f} =1 \geq  \min \{\|(\xy, \js)\|_{f}, \|(\yz, \jt)\|_{f}\}$
Now we consider two cases:
\begin{center}
(i) $\xy =\kJX$ and $\yz=\kJX$;
(ii)   $\xy \neq \kJX$ or  $\yz \neq \kJX$.
\end{center}
For the first case, we have $\|(\xy, \js)\|_{f}=1$, $\|(\yz, \jt)\|_{f}=1$, and
$\xz =\kJX$ since the induced order $\leX$ is transitive.  Hence
\begin{align*}
\|(\xz, \js + \jt)\|_{f} =1 =\min \{1,1\}=\min \{\|(\xy, \js)\|_{f}, \|(\yz, \jt)\|_{f}\}.
\end{align*}
The second case implies $\|(\xy, \js)\|_{f}=\tfrac{\js}{\js +1}$ or
   $\|(\yz, \jt)\|_{f}=\tfrac{\jt}{\jt +1}$.
It is obvious that if $\xz =\kJX$, then
\begin{align*}
\|(\xz, \js + \jt)\|_{f}=1 \geq \min \{\|(\xy, \js)\|_{f}, \|(\yz, \jt)\|_{f}\}.
\end{align*}
If $\xz \neq \kJX$, then
\begin{align*}
\|(\xz, \js + \jt)\|_{f}&=\tfrac{\js +\jt}{\js +\jt +1}
 \geq \min \{\tfrac{\js}{\js +1}, \tfrac{\jt}{\jt +1}\}
 \\&=\min \{\|(\xy, \js)\|_{f}, \|(\yz, \jt)\|_{f}\}.
\end{align*}
Therefore $(\Xo0, \|\cdot \|_{f})$ is a fuzzy normed BCK/BCI-algebra. 	
\end{proof}

\begin{theorem}
	Let \kXo0 and \kYo0  be BCK/BCI-algebras, and let
$\kbp : \LomX \rightarrow \LomY$ be a  homomorphism.  	
If $\kbp$ is a bijection and  $(\Xo0,$ $\| \cdot \|_{f})$ is a fuzzy normed BCK/BCI-algebra, then
$(\Yo0,$ $\| \cdot \|_{g})$ is a fuzzy normed BCK/BCI-algebra where
$\|(\y, \jt)\|_{g} =\|(\kbp^{-1}(\y), \jt) \|_{f}$ for all $\y\in \LomY$ and $\jt>0$.
\end{theorem}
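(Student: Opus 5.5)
The plan is to reduce the statement to Theorem~\ref{TqT39-251207}(i). Since $\kbp$ is a bijection, $\kbp^{-1}:\LomY\rightarrow\LomX$ exists, so $\| \cdot \|_{g}$ is well defined. For every $\x\in\LomX$ and $\jt>0$ we have
\begin{align*}
\|(\kbp(\x),\jt)\|_{g}=\|(\kbp^{-1}(\kbp(\x)),\jt)\|_{f}=\|(\x,\jt)\|_{f},
\end{align*}
that is, $\| \cdot \|_{f}=\| \cdot \|_{g}\circ\kbp$. This is exactly the compatibility hypothesis of Theorem~\ref{TqT39-251207}. In particular, $\kbp$ is a surjective homomorphism and $\| \cdot \|_{f}$ is a fuzzy BCK/BCI-norm on $\Xo0$, so part (i) of that theorem gives at once that $\| \cdot \|_{g}$ is a fuzzy BCK/BCI-norm on $\Yo0$. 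Hence $(\Yo0,\| \cdot \|_{g})$ is a fuzzy normed BCK/BCI-algebra.

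For a self-contained argument, I would instead verify \eqref{(FN1)}--\eqref{(FN4)} for $\| \cdot \|_{g}$ directly, using that $\kbp^{-1}$ is itself a homomorphism. Indeed, for $\y_1,\y_2\in\LomY$ write $\y_i=\kbp(\x_i)$. Then $\kbp(\x_1\opX\x_2)=\y_1\opY\y_2$, so $\kbp^{-1}(\y_1\opY\y_2)=\kbp^{-1}(\y_1)\opX\kbp^{-1}(\y_2)$. Also $\kbp^{-1}(\kJY)=\kJX$, because $\kbp(\kJX)=\kbp(\kJX\opX\kJX)=\kbp(\kJX)\opY\kbp(\kJX)=\kJY$ by ($I_3$).

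The four axioms then transfer as follows.
\begin{itemize}
\item \eqref{(FN1)}: we have $\|(\y,\jt)\|_{g}=1$ iff $\kbp^{-1}(\y)=\kJX$ iff $\y=\kJY$.
\item \eqref{(FN2)} and \eqref{(FN3)}: these are immediate, since for fixed $\y$ the map $\jt\mapsto\|(\y,\jt)\|_{g}$ coincides with $\jt\mapsto\|(\kbp^{-1}(\y),\jt)\|_{f}$.
\item \eqref{(FN4)}: for $\y_1,\y_2,\y_3\in\LomY$, we rewrite $\|(\y_1\opY\y_3,\js+\jt)\|_{g}$ as $\|(\kbp^{-1}(\y_1)\opX\kbp^{-1}(\y_3),\js+\jt)\|_{f}$. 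Applying \eqref{(FN4)} for $\| \cdot \|_{f}$ with middle element $\kbp^{-1}(\y_2)$ and pulling back through $\kbp^{-1}$ gives the required inequality.
\end{itemize}

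There is no real obstacle in this argument. The only point needing care is checking that $\kbp^{-1}$ preserves $\op$ and the zero element, and this is the short computation above.
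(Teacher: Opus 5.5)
Your proof is correct, and your main route differs from the paper's. The paper does not invoke Theorem~\ref{TqT39-251207}. It re-verifies \eqref{(FN1)}--\eqref{(FN4)} for $\| \cdot \|_{g}$ directly by pulling everything back along $\kbp^{-1}$, and it asserts without proof that $\kbp^{-1}$ is a homomorphism. That is exactly your ``self-contained'' fallback. You additionally check that $\kbp^{-1}$ preserves $\op$ and sends $\kJY$ to $\kJX$; the paper uses the latter silently in its \eqref{(FN1)} chain.

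Your primary argument instead observes that $\| \cdot \|_{f}=\| \cdot \|_{g}\circ\kbp$ and applies Theorem~\ref{TqT39-251207}(i). The present theorem then becomes a one-line corollary, in which injectivity serves only to make $\| \cdot \|_{g}$ well defined.

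This route buys economy, and it also clarifies how the two results relate. Under the hypotheses of Theorem~\ref{TqT39-251207}(i), any $\x$ with $\kbp(\x)=\kJY$ satisfies $\|(\x,\jt)\|_{f}=\|(\kJY,\jt)\|_{g}=\|(\kJX,\jt)\|_{f}=1$, so $\x=\kJX$. A BCK/BCI-homomorphism with trivial kernel is injective: if $\kbp(\x)=\kbp(\y)$, then $\xy$ and $\yx$ lie in the kernel, so $\x=\y$ by ($I_4$). Hence the hypotheses of part (i) already force $\kbp$ to be bijective, and the two statements have the same content.

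The direct verification, for its part, does not depend on the earlier theorem and makes the homomorphism property of $\kbp^{-1}$ explicit.
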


\begin{proof}
Let $\x, \y, \z\in \LomY$ and $\js, \jt \in (0, \infty)$.  Since $\kbp$ is  a bijection,
there exist unique elements $\kx, \ky, \kz \in \LomX$ such that $\kbp(\kx)=\x,$ $\kbp(\ky)=\y,$ and
$\kbp(\kz)=\z$, equivalently, $\kx =\kbp^{-1}(\x),$ $\ky =\kbp^{-1}(\y),$ and $\kz =\kbp^{-1}(\z)$.
	Then
 \begin{align*}
\|(\x, \jt)\|_{g}=1 &~\Leftrightarrow ~\|(\kbp^{-1}(\x), \jt) \|_{f}=1
 \\&~\Leftrightarrow ~\|(\kx, \jt) \|_{f}=1
 \\& ~\Leftrightarrow ~\kx =\kJX  \qquad \text{\rm (by (FN1) for $\| \cdot \|_{f}$)}
 \\& ~\Leftrightarrow ~\kbp(\kx) =\kbp(\kJX)
 \\& ~\Leftrightarrow ~\x=\kJY,
  \end{align*}
so $\| \cdot \|_{g}$ satisfies \eqref{(FN1)}.
	Suppose $\jt \geq \js$. Then
\begin{align*}
 \|(\x, \jt)\|_{g}&=\|(\kbp^{-1}(\x), \jt) \|_{f} =\|(\kx, \jt) \|_{f}
 \\&\geq \|(\kx, \js) \|_{f} \qquad \text{\rm (by (FN2) for $\| \cdot \|_{f}$)}
\\&=\|(\kbp^{-1}(\x), \js) \|_{f}
\\&= \|(\x, \js)\|_{g}
\end{align*}	
and
$\underset{\jt \to \infty} \lim \|(\x, \jt)\|_{g}
 =\underset{\jt \to \infty} \lim \|(\kbp^{-1}(\x), \jt) \|_{f}
 =\underset{\jt \to \infty} \lim \|(\kx, \jt) \|_{f}=1$
by (FN3) for $\| \cdot \|_{f}$. Thus
 $\| \cdot \|_{g}$ satisfies \eqref{(FN2)} and \eqref{(FN3)}.
 Since $\kbp$ is a bijection, $\kbp^{-1}$ is also a homomorphism from $\LomY$ to $\LomX$.
Hence
\begin{align*}
&\|(\x\opY \z, \js +\jt)\|_{g} =\|(\kbp^{-1}(\x \opY \z), \js +\jt) \|_{f}
\\&=\|(\kbp^{-1}(\x) \opX \kbp^{-1}(\z), \js +\jt) \|_{f}
\\&=\|(\kx \opX \kz, \js +\jt) \|_{f}
 \\&  \geq \min \{\|(\kx \opX \ky, \js) \|_{f}, \|(\ky \opX \kz, \jt) \|_{f}\}
\\&=\min \{\|(\kbp^{-1}(\x) \opX \kbp^{-1}(\y), \js) \|_{f},
           \|(\kbp^{-1}(\y) \opX \kbp^{-1}(\z), \jt) \|_{f}\}
\\&=\min \{\|(\kbp^{-1}(\x \opY \y), \js) \|_{f},
    \|(\kbp^{-1}(\y \opY \z), \jt) \|_{f}\}
\\&=\min \{\|(\x\opY \y, \js)\|_{g}, \|(\y\opY \z, \jt)\|_{g}\}
\end{align*}
by (FN4) for $\| \cdot \|_{f}$.
Consequently, $(\Yo0,$ $\| \cdot \|_{g})$ is a fuzzy normed BCK/BCI-algebra.
\end{proof}

\begin{theorem}\label{PqQ27-251123}
	Let $\| \cdot \|_{f} : \LomX \times (0, \infty) \rightarrow [0, 1]$ be a
	mapping satisfying the conditions \eqref{(FN1)}, \eqref{(FN2)}, and \eqref{(FN3)}. Then
	$(\Xo0, \| \cdot \|_{\kwp})$ is a fuzzy normed BCK/BCI-algebra  if and only if it satisfies
	\begin{align}\label{qcQ27-251123}
		\|(\x, \js +\jt)\|_{\kwp}\geq \min \{\|(\xy, \js)\|_{\kwp}, \|(\y, \jt)\|_{\kwp}\}
	\end{align}
	for all $\x, \y\in \LomX$ and $\js, \jt \in (0, \infty)$.
\end{theorem}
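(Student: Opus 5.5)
The forward direction is immediate. If $(\Xo0,\|\cdot\|_{\kwp})$ is a fuzzy normed BCK/BCI-algebra, then \eqref{qcQ27-251123} is exactly \eqref{qcP47-251206-7}. By Remark~\ref{RqP47-251206} this holds in the BCI case as well: put $\z=\kJX$ in \eqref{(FN4)} and use \eqref{{a1}} twice. So all the work is in the converse. There we assume \eqref{(FN1)}--\eqref{(FN3)} together with \eqref{qcQ27-251123}, and we must derive \eqref{(FN4)}.

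For the converse I would apply \eqref{qcQ27-251123} with the element $\x\opX\z$ in place of $\x$ and a cleverly chosen intermediate element in place of $\y$. The identity ($I_1$) gives $((\x\opX\z)\opX(\x\opX\y))\opX(\y\opX\z)=\kJX$. I would therefore take $\x\opX\y$ as the intermediate element. This gives
\begin{align*}
\|(\x\opX\z,\js+\jt)\|_{\kwp}\geq\min\{\|((\x\opX\z)\opX(\x\opX\y),\jt)\|_{\kwp},\ \|(\x\opX\y,\js)\|_{\kwp}\},
\end{align*}
where the order of $\js,\jt$ is swapped by commutativity of addition. The second term is already what we want. It then remains to bound $\|(\kz',\jt)\|_{\kwp}$ from below by $\|(\y\opX\z,\jt)\|_{\kwp}$, where $\kz':=(\x\opX\z)\opX(\x\opX\y)\leX\y\opX\z$. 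The symmetric alternative is to use $\y\opX\z$ as the intermediate element. That instead requires comparing $(\x\opX\z)\opX(\y\opX\z)\leX\x\opX\y$ with $\x\opX\y$, via the BCI inequality $((\x\opX\z)\opX(\y\opX\z))\opX(\x\opX\y)=\kJX$.

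The main obstacle is this comparison step. In general $\kz'\leX\y\opX\z$ does not give $\|(\kz',\jt)\|_{\kwp}\geq\|(\y\opX\z,\jt)\|_{\kwp}$ at the same parameter; Example~\ref{Eq[A3-2]} shows this. What \eqref{qcQ27-251123} itself yields, applied with $\kz'$ and $\y\opX\z$ and using $\kz'\opX(\y\opX\z)=\kJX$ together with \eqref{(FN1)}, is $\|(\kz',\vt+\jt')\|_{\kwp}\geq\|(\y\opX\z,\jt')\|_{\kwp}$ for any split $\vt+\jt'$. Combining the two applications therefore gives, for every $\vt\in(0,\jt)$,
\begin{align*}
\|(\x\opX\z,\js+\jt)\|_{\kwp}\geq\min\{\|(\x\opX\y,\js)\|_{\kwp},\ \|(\y\opX\z,\jt-\vt)\|_{\kwp}\}.
\end{align*}
To close the gap I would first try to absorb the $\vt$ by splitting the budget on the left instead, applying the argument at $\js+\jt+\vt$ and using \eqref{(FN2)}. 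This still only gives the inequality with the parameter $\jt$ replaced by $\jt-\vt$. If no exact algebraic identity is available to make the intermediate difference equal to $\kJX$ exactly, I would need a left-continuity property of $\jt\mapsto\|(\cdot,\jt)\|_{\kwp}$ to let $\vt\to0^+$. I expect this limiting step to be the genuine difficulty, and I would check carefully whether the statement needs such an additional hypothesis.
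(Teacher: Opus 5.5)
Your forward direction matches the paper's ($z=0$ in \eqref{(FN4)}). You were right not to finish the converse. The paper takes your ``symmetric alternative'': it uses the intermediate element $y*z$ and $(x*z)*(y*z)\le x*y$. It bridges the comparison with the claim \eqref{qcQ27-251123-1}, namely $x\le y\Rightarrow\|(x,t)\|_f\ge\|(y,t)\|_f$, obtained from $\|(x,s+t)\|_f\ge\|(y,t)\|_f$ by letting $s\to0^+$. That limit only gives $\lim_{u\to t^+}\|(x,u)\|_f\ge\|(y,t)\|_f$, so it silently assumes right-continuity. The claim is in fact false. Example~\ref{Eq[A3-2]}(i) is a fuzzy normed BCK-algebra, so it satisfies \eqref{qcQ27-251123} by the forward direction. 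Yet $\varrho_1\le\varrho_2$ and $\|(\varrho_1,1)\|_f=0.5<0.6=\|(\varrho_2,1)\|_f$. The published proof fails exactly at the step you flagged.

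The gap cannot be closed: without a continuity hypothesis the ``if'' direction is false. Take the BCK-algebra $\mathcal P(\{1,2,3,4\})$ with $A*B=A\setminus B$ and zero $\emptyset$, and let $\rho(A)=|A\cap\{1,2\}|$. Put $\|(\emptyset,t)\|_f=1$. For $A\neq\emptyset$ put $\|(A,t)\|_f=1-\tfrac12e^{\rho(A)-t}$ if $t>\rho(A)$, or if $t=\rho(A)$ and $A\not\subseteq\{1,2\}$; otherwise put $\|(A,t)\|_f=\tfrac14$. Then \eqref{(FN1)}--\eqref{(FN3)} are clear.

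For \eqref{qcQ27-251123} with $x=A$, $y=B$: every value is at least $\tfrac14$, so assume $\|(A\setminus B,s)\|_f$ and $\|(B,t)\|_f$ both exceed $\tfrac14$. The cases where $A$, $B$ or $A\setminus B$ is empty are easy. Otherwise $s\ge\rho(A\setminus B)$ and $t\ge\rho(B)$, so $s+t\ge\rho(A\setminus B)+\rho(B)\ge\rho(A)$. If $s+t=\rho(A)$, all of these are equalities, so $s=\rho(A\setminus B)>0$. Then $\|(A\setminus B,s)\|_f>\tfrac14$ forces $A\setminus B\not\subseteq\{1,2\}$, hence $A\not\subseteq\{1,2\}$. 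In every case, therefore, $\|(A,s+t)\|_f=1-\tfrac12e^{\rho(A)-s-t}\ge1-\tfrac12e^{\rho(A\setminus B)-s}=\|(A\setminus B,s)\|_f$.

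Now take $x=\{1,2,3\}$, $y=\{1,4\}$, $z=\{3\}$ and $s=t=1$. Then $x*z=\{1,2\}$, $x*y=\{2,3\}$, $y*z=\{1,4\}$, and $\|(\{1,2\},2)\|_f=\tfrac14<\tfrac12=\min\{\|(\{2,3\},1)\|_f,\|(\{1,4\},1)\|_f\}$. So \eqref{(FN4)} fails.

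The converse does hold under a one-sided continuity hypothesis, and your own inequalities prove it. If every $t\mapsto\|(x,t)\|_f$ is left-continuous, let $\varepsilon\to0^+$ in your bound $\|(x*z,s+t)\|_f\ge\min\{\|(x*y,s)\|_f,\|(y*z,t-\varepsilon)\|_f\}$. If every $t\mapsto\|(x,t)\|_f$ is right-continuous, run your first route with budget $t+\varepsilon$ to get $\|(x*z,s+t+\varepsilon)\|_f\ge\min\{\|(x*y,s)\|_f,\|(y*z,t)\|_f\}$, then let $\varepsilon\to0^+$. Condition \eqref{(FN2)} cannot do this step for you, since it points the wrong way. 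The example above is necessarily neither left- nor right-continuous.

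The exact identity you were looking for does hold in p-semisimple BCI-algebras such as $(\mathbb Z,-)$, where $(x*z)*(x*y)=y*z$. There the equivalence holds outright. So the correct write-up is: your forward direction, the converse under continuity, and a counterexample showing the converse fails in general.
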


\begin{proof}
    	Suppose $(\Xo0, \| \cdot \|_{\kwp})$ is a fuzzy normed BCK/BCI-algebra.
	If we take $\z=\kJX$ in \eqref{(FN4)}, then
	\begin{align*}
		\|(\x, \js+ \jt)\|_{\kwp}&\stackrel{\eqref{{a1}}}{=}
		\|(\x\opX \kJX, \js+ \jt)\|_{\kwp}
		\\&\geq \min \{\|(\xy, \js)\|_{\kwp}, \|(\y\opX \kJX, \jt)\|_{\kwp}\}
		\\&\stackrel{\eqref{{a1}}}{=}
		\min \{\|(\xy, \js)\|_{\kwp}, \|(\y, \jt)\|_{\kwp}\},
	\end{align*}
	which proves \eqref{qcQ27-251123}.
	
	Conversely, assume that $(\Xo0, \| \cdot \|_{\kwp})$ satisfies \eqref{qcQ27-251123}.
	We first show that
	\begin{align}\label{qcQ27-251123-1}
		\x\leX \y \implies \|(\x, \jt)\|_{\kwp} \geq \|(\y, \jt)\|_{\kwp}
	\end{align}
	for all $\x, \y\in \LomX$ and $\jt \in (0, \infty)$.
	Let $\x, \y\in \LomX$ be such that $\x\leX \y$. Then $\xy =\kJX$, so
	\begin{align*}
		\|(\x, \js +\jt)\|_{\kwp} &\geq \min \{\|(\xy, \js)\|_{\kwp}, \|(\y, \jt)\|_{\kwp}\}
		\\&=\min \{\|(\kJX, \js)\|_{\kwp}, \|(\y, \jt)\|_{\kwp}\}
		\\&\stackrel{\eqref{(FN1)}}{=}
		\min \{1, \|(\y, \jt)\|_{\kwp}\}=\|(\y, \jt)\|_{\kwp}
	\end{align*}
	Since $\js$ can be arbitrarily small (approach to $0$),
	we can choose $\js \in (0, \infty)$ such that $\js+\jt$ is arbitrarily close to $\jt$.
	Thus we can obtain $\|(\x, \jt)\|_{\kwp} \geq \|(\y, \jt)\|_{\kwp}$.
	Since $(\xz)\opX (\yz)\leX \xy$ for all $\x, \y, \z\in \LomX$,
	it follows from  \eqref{qcQ27-251123-1} that
	\begin{align*}
		\|((\xz)\opX (\yz), \js)\|_{\kwp} \geq \|(\xy, \js)\|_{\kwp}.
	\end{align*}
	Using \eqref{qcQ27-251123}, we have
	\begin{align*}
		\|(\xz, \js +\jt)\|_{\kwp}&\geq
		\min \{\|((\xz)\opX (\yz), \js)\|_{\kwp}, \|(\yz, \jt)\|_{\kwp}\}
		\\&\geq \min \{\|(\xy, \js)\|_{\kwp}, \|(\yz, \jt)\|_{\kwp}\},
	\end{align*}
	so \eqref{(FN4)} is valid. Therefore
	$(\Xo0, \| \cdot \|_{\kwp})$ is a fuzzy normed BCK/BCI-algebra.
\end{proof}

%%%%%%%%%%%%%%%%%%%%%%%%%%%%%%%%%%%%%%%%%%%%%%%%%%%%%%%%%%%%%%%%%%%%%%
\section{Conclusions}
%%%%%%%%%%%%%%%%%%%%%%%%%%%%%%%%%%%%%%%%%%%%%%%%%%%%%%%%%%%%%%%%%%%%%%

In this paper, we have introduced the concept of fuzzy normed
BCK-algebras and fuzzy normed BCI-algebras and systematically developed
their fundamental properties. Various examples were provided to
illustrate the independence of the axioms and to clarify the differences
between the BCK and BCI cases. We established several inequalities and
order-related properties that characterize fuzzy norms in these
algebras.

A major contribution of this work is the analysis of fuzzy norms under
algebra homomorphisms, including injective, surjective, and bijective
cases. We also proved that every BCK/BCI-algebra admits a fuzzy norm and
obtained a characterization theorem reducing the main fuzzy norm
condition to a simpler inequality.

These results enrich the theory of BCK/BCI-algebras by introducing
fuzzy-analytical tools and open several directions for future research,
such as fuzzy normed ideals and filters, topological structures induced
by fuzzy norms, and applications to logical systems and decision-making
models.

%%%%%%%%%%%%%%%%%%%%%%%%%%%%%%%%%%%%%%%%%%%%%%%%%%%%%%%%%%%%%%%%%%%%%%%

\end{document}